\theoremstyle{plain}
\newtheorem{Theorem}{Theorem}[section]
\newtheorem{Lemma}[Theorem]{Lemma}
\newtheorem{Corollary}[Theorem]{Corollary}
\newtheorem{Proposition}[Theorem]{Proposition}
\theoremstyle{remark}
\newtheorem{Remark}[Theorem]{Remark}
\newcommand{\hhb}[1]{{\hbox to#1pt{}}}
\newcommand{\notdivides}{\mbox{$\not|$}}
\address{Fachbereich Mathematik und Statistik, University of 
Konstanz, 78457 Konstanz, Germany}
\email{arno.fehm@uni-konstanz.de}
\begin{document}

\title{Existential $\emptyset$-definability of\\ henselian valuation rings}
\author{Arno Fehm}

\begin{abstract}
In \cite{AnscombeKoenigsmann}, Anscombe and Koenigsmann give an existential $\emptyset$-definition of
the ring of formal power series $F[[t]]$ in its quotient field in the case where $F$ is finite.
We extend their method in several directions
to give general definability results for henselian valued fields with finite or pseudo-algebraically closed residue fields.
\end{abstract}

\maketitle

\section{Introduction}

\noindent
The question of first-order definability of valuation rings in their quotient fields has a long history.
Given a valued field $K$, one is interested in whether there exists a first-order formula $\varphi$
in the language $\mathcal{L}=\{+,-,\cdot,0,1\}$ of rings such that the set $\varphi(K)$ defined by $\varphi$ in $K$
is precisely the valuation ring, and what complexity such formula must have.

Many results of this kind are known for {\em henselian} valued fields,
like fields of formal power series $K=F((t))$ over a field $F$,
and their valuation ring $F[[t]]$. 
In this setting, a definition going back to Julia Robinson gives an existential definition of the valuation ring using the parameter $t$.
Later, Ax \cite{Ax} gave a definition of the valuation ring, which uses no parameters, but is not existential.

Recently, Anscombe and Koenigsmann \cite{AnscombeKoenigsmann} succeeded to give an existential and pa\-ra\-meter-free definition of
$F[[t]]$ in $F((t))$ in the special case where $F=\mathbb{F}_q$ is a finite field.
Their proof uses the fact that $\mathbb{F}_q$ can be defined in $\mathbb{F}_q((t))$ by the quantifier-free formula $x^q-x=0$.
In particular, their result does not apply to any infinite field $F$, and their formula depends heavily on $q$.

In this note we simplify and extend their method.
As a first application we get the following general definability result for 
henselian valued fields with
finite or pseudo-algebraically closed residue fields (Theorem \ref{thm:finite} and Theorem \ref{thm:PAC}):

\begin{Theorem}
Let $K$ be a henselian valued field with valuation ring $\mathcal{O}$ and residue field $F$.
If $F$ is finite or pseudo-algebraically closed and the algebraic part of $F$ is not algebraically closed,
then there exists an $\exists$-$\emptyset$-definition of $\mathcal{O}$ in $K$.
\end{Theorem}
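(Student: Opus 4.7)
My plan is to follow and simplify the Anscombe--Koenigsmann strategy. Their key parameter-free device in the case $F = \mathbb{F}_q$ was the defining polynomial $X^q - X$ of $\mathbb{F}_q$ in $\mathbb{F}_q((t))$. I would replace this with a more flexible ingredient: a polynomial $h(Y)$ over the prime field of $K$ having no root in the residue field $F$.

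Producing $h$ is the first step, and is where the hypothesis on the algebraic part enters. For $F$ finite, any minimal polynomial over the prime field of a generator of a proper finite extension of $F$ suffices. For $F$ pseudo-algebraically closed in characteristic $p > 0$, the algebraic part $F_0 = F \cap \overline{\mathbb{F}_p}$ is a subfield of $\overline{\mathbb{F}_p}$ and hence normal over $\mathbb{F}_p$; for any $\alpha \in \overline{\mathbb{F}_p} \setminus F_0$, the minimal polynomial of $\alpha$ over $\mathbb{F}_p$ has no roots in $F_0$, and therefore none in $F$. The characteristic-zero case is handled by the analogous classical fact that a proper algebraic subfield of $\overline{\mathbb{Q}}$ cannot contain a root of every irreducible polynomial in $\mathbb{Q}[X]$.

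Second, I would use $h$ to build an existential $\emptyset$-formula $\varphi(x) \equiv \exists \vec{y}\; P(x, \vec{y}) = 0$, with $P \in \mathbb{Z}[X, \vec{Y}]$ constructed from $h$ so that $\varphi(K) = \mathcal{O}$. For the completeness inclusion $\mathcal{O} \subseteq \varphi(K)$, given $a \in \mathcal{O}$ I would produce witnesses $\vec{y} \in K$ in two steps: first solve the reduction of $P(a, \vec{y}) = 0$ modulo $\mathfrak{m}$ in the residue field $F$ (directly by finiteness when $F$ is finite; via the PAC property together with absolute irreducibility of the reduced variety when $F$ is PAC), then lift the $F$-solution to a $K$-solution by Hensel's lemma, using the henselianity of $K$. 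For the soundness inclusion $\varphi(K) \subseteq \mathcal{O}$, assuming $v(a) < 0$ and $P(a, \vec{y}) = 0$ for some $\vec{y}$, a careful analysis of the valuations of the monomials in this equation would force the residue of some suitable normalised combination of the $y_i$ to be a root of $\bar{h}$ in $F$, contradicting the choice of $h$.

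The principal obstacle I foresee is ensuring that, for every parameter $a \in \mathcal{O}$, the reduction of $P(a, \vec{y}) = 0$ modulo $\mathfrak{m}$ defines an absolutely irreducible variety over $F$; without this the PAC hypothesis yields nothing. A secondary subtlety arises in mixed characteristic, where $h$ must be lifted from $\mathbb{F}_p[Y]$ to $\mathbb{Z}[Y]$ in a way that both avoids zeros modulo $\mathfrak{m}$ and preserves the valuation bookkeeping used in the soundness step.
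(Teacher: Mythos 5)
Your outline correctly identifies the key raw ingredients: a polynomial $f$ over the prime field whose reduction has no zero in $F$, henselianity for lifting, and the PAC property (or finiteness) to solve equations in the residue field. You also correctly flag the main obstacle, namely ensuring that the reduced variety one feeds to the PAC hypothesis is absolutely irreducible for \emph{every} value of the parameter $a$. But you have not produced the formula $P$, and the plan of finding a single polynomial $P(x,\vec y)$ whose residue variety is absolutely irreducible for every $\bar a\in F$ does not by itself resolve that obstacle; it merely restates it. The paper's proof avoids the difficulty by a structural idea that is absent from your sketch: it decomposes $\mathcal{O}$ as a \emph{sum} of two existentially $\emptyset$-definable sets, $\mathcal{O}=U+T$, where $U=f(K)^{-1}-f(K)^{-1}$ satisfies $\mathfrak{m}\subseteq U\subseteq\mathcal{O}$, and $T=f(K)^{-1}f(K)^{-1}\cup\{0\}\subseteq\mathcal{O}$ meets every residue class. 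This factors the problem so that henselianity and PAC each do exactly one job, and neither has to cope with a parameter-dependent variety.

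Concretely, for $U$ the soundness inclusion $U\subseteq\mathcal{O}$ is immediate: $f$ monic with $\bar f$ zero-free forces $f(K)\cap\mathfrak{m}=\emptyset$ because $\mathcal{O}$ is integrally closed, hence $f(K)^{-1}\subseteq\mathcal{O}$. There is no delicate valuation bookkeeping as in your soundness sketch. The completeness inclusion $\mathfrak{m}\subseteq U$ uses Hensel applied to $g(X)=f(X)-(f(a)+x)$ for $x\in\mathfrak{m}$, which requires some $a\in\mathcal{O}$ with $f'(a)\notin\mathfrak{m}$; this is why the paper arranges $f$ to be separable (by a normal basis argument in the finite case, and automatically in the infinite case since $F_0$ is perfect and $F$ is infinite). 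For $T$, the absolute irreducibility you worried about reduces to the single parameter-free family $f(X)f(Y)=c$, $c\neq 0$, which is absolutely irreducible whenever $f$ is square-free; the point $c=0$ is handled by adjoining $\{0\}$ to $T$. Then PAC (or Hasse--Weil for $F$ finite of sufficient size) gives $F^\times\subseteq\bar f(F)\bar f(F)$, and lifting residue solutions into $\mathcal{O}$ is elementary and needs no Hensel. Writing $a=u+t$ with $t\in T$ matching $\bar a$ and $u=a-t\in\mathfrak{m}\subseteq U$ gives $\mathcal{O}\subseteq U+T$. In short, your proposal has the right materials but is missing the $U+T$ decomposition; without it, both the absolute-irreducibility issue you flagged and the soundness step remain genuinely open, whereas with it they dissolve.
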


As a further application, in Section \ref{sec:uniform} we find
definitions of the valuation ring 
which are uniform for large (infinite) families of finite residue fields,
like the following one for finite prime fields (Theorem \ref{thm:uniform}):

\begin{Theorem}\label{thm:intro2}
For every $\epsilon>0$ there exists an $\exists$-$\emptyset$-formula $\varphi$ 
and a set $P$ of prime numbers of Dirichlet density at least $1-\epsilon$
such that for any henselian valued field $K$ with valuation ring $\mathcal{O}$ 
and residue field $F$ with $|F|\in P$,
the formula $\varphi$ defines $\mathcal{O}$ in $K$.
\end{Theorem}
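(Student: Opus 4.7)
The plan is to render the proof of Theorem~\ref{thm:finite} uniform in the residue characteristic by replacing its $q$-dependent ingredient (the polynomial $X^q-X$ used to cut out $\mathbb{F}_q$ inside $K$) with a fixed polynomial whose mod-$p$ behaviour is controlled, on a density-$(1-\epsilon)$ set of primes, via Dirichlet's theorem on primes in arithmetic progressions. I would first re-examine the proof of Theorem~\ref{thm:finite} and isolate from it a uniform \emph{Key Lemma} of the following shape: given a monic $f(X)\in\mathbb{Z}[X]$ satisfying a certain residue-field-only condition (which I expect to be: $\overline f$ has no root in $F$), there is an $\exists$-$\emptyset$-formula $\varphi_f$, depending only on the (integer) coefficients of $f$, that defines $\mathcal{O}$ in every henselian valued field $K$ whose residue field $F$ satisfies that condition. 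Since the introduction announces that the paper ``simplifies and extends'' the Anscombe--Koenigsmann method, I expect the proof of Theorem~\ref{thm:finite} to already be organised so as to produce essentially this lemma, with the role of $X^q-X$ being reduced to: providing, via Hensel's lemma, a witness $b\in\mathcal{O}^\times$ certifiable in an $\exists$-$\emptyset$ way.

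Given the Key Lemma, the proof of Theorem~\ref{thm:intro2} reduces to choosing a good $f$. Fix $\epsilon>0$ and pick an integer $n$ with $\phi(n)>1/\epsilon$ (for instance any prime $n>1/\epsilon$). Set $f:=\Phi_n$, the $n$-th cyclotomic polynomial, and
\[
    P := \{\,p\text{ prime}: p\nmid n\text{ and }p\not\equiv 1\pmod n\,\}.
\]
For $p\nmid n$, the roots of $\Phi_n$ in $\overline{\mathbb{F}_p}$ are exactly the primitive $n$-th roots of unity, and these lie in $\mathbb{F}_p$ iff $n\mid |\mathbb{F}_p^\times|=p-1$, iff $p\equiv 1\pmod n$. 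Hence $\Phi_n$ has no root in $\mathbb{F}_p$ for every $p\in P$. Dirichlet's theorem gives that $\{p:p\equiv 1\pmod n\}$ has Dirichlet density $1/\phi(n)<\epsilon$, so $P$ has density at least $1-\epsilon$. Applying the Key Lemma to $f=\Phi_n$, the single formula $\varphi:=\varphi_{\Phi_n}$ then defines $\mathcal{O}$ in every henselian $K$ with residue field $\mathbb{F}_p$, $p\in P$, which is precisely the statement of Theorem~\ref{thm:intro2}.

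The substance of the argument, and the main obstacle, is the extraction of the Key Lemma. One must carry out the Anscombe--Koenigsmann construction using a generic polynomial $f$ in place of $X^q-X$, and verify that the only property of the residue field used is the absence of a root of $\overline f$; concretely, that hypothesis lets one solve $f(y)\neq 0$ uniformly on $F$, which via Hensel's lemma produces a witness $b\in K$ with $v(f(b))=0$, so that $f(b)^{-1}$ can be named existentially and used to cut out $\mathcal{O}$. Once that is done, the cyclotomic choice of $f$ and the Dirichlet density calculation in the second paragraph are entirely routine.
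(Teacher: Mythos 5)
Your high-level plan --- fix one integer polynomial and control the density of bad primes via Dirichlet --- is sound, and the cyclotomic choice $f=\Phi_n$ is a legitimately different and somewhat cleaner route than the paper's. The paper's proof of Theorem~\ref{thm:uniform} uses instead the quadratics $f_n=X^2-n$; since each $f_n$ catches only a density-$\frac12$ set of primes $P_n=\{p:(\frac{n}{p})=-1\}$, it must take a conjunction $\varphi=\bigwedge_{n=2}^N\bigl((\exists y)(y^2=n)\vee\eta_{f_n}\bigr)$ and invoke quadratic reciprocity plus Dirichlet to push $\delta\bigl(\bigcup_{n\leq N}P_n\bigr)$ above $1-\epsilon$. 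Your single cyclotomic polynomial catches density $1-1/\phi(n)$ in one step, trading a long conjunction for higher degree.

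Where you have a genuine gap is in your account of the Key Lemma, which you rightly identify as the substance of the proof. You say the hypothesis ``$\bar f$ has no root in $F$'' lets one, via Hensel, ``produce a witness $b$ with $v(f(b))=0$ so that $f(b)^{-1}$ can be named existentially and used to cut out $\mathcal{O}$.'' But $v(f(b))=0$ for every $b\in K$ already follows from $\bar f$ having no root (Lemma~\ref{PZ}a; Hensel is not needed there), and it only yields the containment $f(K)^{-1}\subseteq\mathcal{O}$, not equality. The actual scheme is $\mathcal{O}=U+T$ (Lemma~\ref{idea}), where $U=f(K)^{-1}-f(K)^{-1}\supseteq\mathfrak m$ by a Hensel argument (Lemma~\ref{lemmag}) and, crucially, $T$ is an $\exists$-$\emptyset$-definable subset of $\mathcal{O}$ meeting every residue class. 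In Theorem~\ref{thm:finite} that $T$ is $\{x:x^q=x\}$, and it is precisely this $T$ that must be made uniform in $q$; the uniform replacement is $T=f(K)^{-1}f(K)^{-1}\cup\{0\}$, with $\bar T=F$ proved via a Hasse--Weil estimate (Lemmas~\ref{PAC1} and \ref{PAC2}). That step needs in addition that $\bar f$ be square-free and that $|F|>c(\deg f)=(2\deg f-1)^4$, neither of which appears in your sketch. The correct Key Lemma is Proposition~\ref{prop}, and once it is available your $\Phi_n$ does satisfy its hypotheses for all $p\nmid n$ with $p\not\equiv1\pmod n$ and $p>c(\phi(n))$ (separability mod $p$ from $p\nmid n$; a point $a$ with $\Phi_n'(a)\not\equiv0$ from $p>\phi(n)-1$). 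Hence your $P$ must also discard the finitely many primes $p\leq c(\phi(n))$ --- an omission which does not affect the density but should be stated.
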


In particular, this applies to power series fields $\mathbb{F}_p((t))$
and $p$-adic fields $\mathbb{Q}_p$.
Theorem~\ref{thm:intro2} is in a sense optimal, see the discussion at the end of this note.

\section{Defining subsets of the valuation ring}

\noindent
Let $K$ be a henselian valued field with valuation ring $\mathcal{O}\subseteq K$, maximal ideal $\mathfrak{m}\subseteq\mathcal{O}$ and residue field $F=\mathcal{O}/\mathfrak{m}$.
For $a\in\mathcal{O}$ we let $\bar{a}=a+\mathfrak{m}\in F$ be its residue class
and write $\bar{f}\in F[X]$ for the reduction of a polynomial $f\in\mathcal{O}[X]$.

We start by simplifying the key lemma of \cite{AnscombeKoenigsmann}, thereby generalizing it to arbitrary henselian valuations.
This proof follows Helbig \cite{Helbig}.
Here, and in what follows, by $f(K)^{-1}$ we mean the set $\{f(x)^{-1}:x\in K\}$ and implicitly claim that $f(x)\neq0$ for all $x\in K$.

\begin{Lemma}\label{PZ}
Let $f\in\mathcal{O}[X]$ be a monic polynomial such that $\bar{f}$ has no zero in $F$, and let $a\in K$. 
Let $U_{f,a}:=f(K)^{-1}-f(a)^{-1}$. Then the following holds:
\begin{enumerate}
\item[a)] $f(K)^{-1}\subseteq\mathcal{O}$
\item[b)] $U_{f,a}\subseteq\mathcal{O}$
\item[c)] If in addition $a\in\mathcal{O}$ and $f'(a)\notin\mathfrak{m}$, then $\mathfrak{m}\subseteq U_{f,a}$.
\end{enumerate}
\end{Lemma}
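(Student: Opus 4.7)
The plan is to treat the three claims in order, reducing (b) to (a) and then reserving Hensel's lemma for (c), which is the only part that genuinely uses the henselian hypothesis.

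For (a), I would fix $x \in K$ and split on whether $x \in \mathcal{O}$. If $x \in \mathcal{O}$, reduction modulo $\mathfrak{m}$ gives $\overline{f(x)} = \bar f(\bar x)$, which is nonzero by hypothesis on $\bar f$; hence $f(x) \in \mathcal{O}^\times$ and $f(x)^{-1} \in \mathcal{O}$. If $x \notin \mathcal{O}$, then $y := x^{-1} \in \mathfrak{m}$, and since $f$ is monic of degree $n \geq 1$, the polynomial $Y^n f(1/Y) \in \mathcal{O}[Y]$ has constant term $1$, so $x^{-n} f(x) \in 1 + \mathfrak{m} \subseteq \mathcal{O}^\times$. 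Combined with $v(x) < 0$, this forces $v(f(x)) < 0$ and again $f(x)^{-1} \in \mathfrak{m} \subseteq \mathcal{O}$. Part (b) is then immediate: $U_{f,a}$ is a difference of elements of $f(K)^{-1} \subseteq \mathcal{O}$ with a single element $f(a)^{-1}$, which lies in $\mathcal{O}$ by (a).

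For (c), fix $m \in \mathfrak{m}$. The aim is to exhibit $x \in K$ with $f(x)^{-1} - f(a)^{-1} = m$, i.e.\ $f(x) = c$ where $c := f(a)/(1 + m f(a))$. Under the assumption $a \in \mathcal{O}$, part (a) applied to $x = a$ gives $f(a) \in \mathcal{O}^\times$, so $1 + m f(a) \in 1 + \mathfrak{m} \subseteq \mathcal{O}^\times$ and therefore $c \in \mathcal{O}^\times$ with $\bar c = \bar f(\bar a)$. Now set $g(X) := f(X) - c \in \mathcal{O}[X]$. Then $\bar g(\bar a) = \bar f(\bar a) - \bar c = 0$ and $\bar g'(\bar a) = \bar f'(\bar a) \neq 0$ by the hypothesis $f'(a) \notin \mathfrak{m}$. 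Henselianity of $(K,\mathcal{O})$ then yields $x \in \mathcal{O}$ with $g(x) = 0$, i.e.\ $f(x) = c$, which rearranges to $f(x)^{-1} - f(a)^{-1} = m$.

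The only delicate point is setting up (c) so that the hypotheses of Hensel's lemma are visibly met: one must check that $c$ is a unit (so that $f(x) = c$ makes sense as a congruence lift of the solution $\bar x = \bar a$), and that the derivative condition on $f$ translates to the required nondegeneracy of $\bar g$. Both reduce to unit/residue bookkeeping once $f(a) \in \mathcal{O}^\times$ has been established via (a); I do not expect any substantive obstacle beyond this.
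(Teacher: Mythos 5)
Your proof is correct and follows essentially the same route as the paper: a case analysis for (a), an immediate deduction for (b), and a single application of Hensel's lemma for (c). The only cosmetic differences are that the paper handles the $x\notin\mathcal{O}$ case of (a) by invoking the integral closedness of $\mathcal{O}$ rather than your reversed-polynomial computation, and in (c) the paper first shows $f(a)+\mathfrak{m}\subseteq f(K)$ and then inverts, whereas you precompute the target value $c=f(a)/(1+mf(a))$ and solve $f(X)=c$ directly; these are two ways of setting up the same Hensel application.
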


\begin{proof}
a)
We have that $f(K)\cap\mathfrak{m}=\emptyset$:
If $x\in K$ with $f(x)\in\mathfrak{m}$, then $x\in\mathcal{O}$ since $\mathcal{O}$ is integrally closed and $f$ is monic,
and hence $\bar{f}(\bar{x})=0$, contradicting the assumption that $\bar{f}$ has no zero in $F$.
Therefore, $f(K)^{-1}\subseteq(K\smallsetminus\mathfrak{m})^{-1}=\mathcal{O}$.

b) From a) we get that $f(K)^{-1}\subseteq\mathcal{O}$, and in particular $f(a)^{-1}\in\mathcal{O}$.
Thus, $U_{f,a}\subseteq\mathcal{O}$.

c)
Now assume that $a\in\mathcal{O}$ and $f'(a)\notin\mathfrak{m}$.
Let $x\in\mathfrak{m}$. 
Since $a\in\mathcal{O}$ we have $f(a)\in\mathcal{O}$, hence $f(a)\in\mathcal{O}^\times$.
Define $g(X)=f(X)-(f(a)+x)\in\mathcal{O}[X]$. 
Then $g(a)=-x\in\mathfrak{m}$ and $g'(a)=f'(a)\notin\mathfrak{m}$,
so by the assumption that $\mathcal{O}$ is henselian there exists $b\in\mathcal{O}$ with $g(b)=0$, i.e.~$f(a)+x=f(b)$. 
Hence, $f(a)+\mathfrak{m}\subseteq f(K)$.
Since $f(a)\in\mathcal{O}^\times$ we get that
$f(a)^{-1}+\mathfrak{m}=(f(a)+\mathfrak{m})^{-1}\subseteq f(K)^{-1}$,
and therefore $\mathfrak{m}\subseteq U_{f,a}$.
\end{proof}

We observe that one can get rid of the element $a$ even if it is not in the (model theoretic) algebraic closure of the prime field:

\begin{Lemma}\label{lemmag}
Let $f\in \mathcal{O}[X]$ be a monic polynomial such that $\bar{f}$ has no zero in $F$, and $a\in \mathcal{O}$ such that $f'(a)\notin\mathfrak{m}$.
Then $U:=f(K)^{-1}-f(K)^{-1}$ satisfies $\mathfrak{m}\subseteq U\subseteq\mathcal{O}$.
\end{Lemma}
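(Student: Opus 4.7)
The plan is to deduce both inclusions directly from Lemma~\ref{PZ}, observing that the set $U$ in the present lemma is obtained from the parametric set $U_{f,a}$ of the previous lemma by replacing the single term $f(a)^{-1}$ by the whole set $f(K)^{-1}$.

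For the upper inclusion $U\subseteq\mathcal{O}$, I would simply apply Lemma~\ref{PZ}(a): since $f(K)^{-1}\subseteq\mathcal{O}$, any difference of two elements of $f(K)^{-1}$ lies in $\mathcal{O}$, so $U\subseteq\mathcal{O}-\mathcal{O}\subseteq\mathcal{O}$.

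For the lower inclusion $\mathfrak{m}\subseteq U$, the key observation is that because $a\in\mathcal{O}\subseteq K$, the element $f(a)^{-1}$ is itself a member of $f(K)^{-1}$ (the inverse makes sense by the proof of Lemma~\ref{PZ}(a), which shows $f(K)\cap\mathfrak{m}=\emptyset$, hence $f(a)\in\mathcal{O}^\times$). Consequently,
\[
U_{f,a} = f(K)^{-1} - f(a)^{-1} \subseteq f(K)^{-1} - f(K)^{-1} = U.
\]
Since the hypotheses of Lemma~\ref{PZ}(c) are exactly those imposed here on $f$ and $a$, we have $\mathfrak{m}\subseteq U_{f,a}$, and combining with the inclusion above yields $\mathfrak{m}\subseteq U$.

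There is no real obstacle: the lemma is essentially a packaging statement that shows the parameter $a$ used in Lemma~\ref{PZ}(c) can be absorbed into the description of $U$, at the cost of replacing the single shift $-f(a)^{-1}$ by a difference set. The only thing to check carefully is that $f(a)^{-1}$ really does belong to $f(K)^{-1}$, which is immediate from $a\in\mathcal{O}\subseteq K$ and the fact that $f(K)$ avoids $\mathfrak{m}$, hence avoids $0$.
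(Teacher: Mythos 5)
Your proof is correct and matches the paper's argument exactly: both inclusions are deduced from Lemma~\ref{PZ}, with $U\subseteq\mathcal{O}$ from part (a) and $\mathfrak{m}\subseteq U_{f,a}\subseteq U$ from part (c). The only difference is that you spell out explicitly why $U_{f,a}\subseteq U$, which the paper leaves implicit.
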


\begin{proof}
By Lemma \ref{PZ}a, $f(K)^{-1}\subseteq\mathcal{O}$, hence $U\subseteq\mathcal{O}$. 
Since $a\in\mathcal{O}$ and $f'(a)\notin\mathfrak{m}$, Lemma~\ref{PZ}c implies that $\mathfrak{m}\subseteq U_{f,a}\subseteq U$.
\end{proof}

Clearly, $U$ can be defined in $K$ by the $\exists$-formula
$$
 \varphi_f(x) \equiv (\exists y,z,y_1,z_1)( x=y_1-z_1 \wedge y_1f(y)=1 \wedge z_1f(z)=1 ).
$$
Note that if $f\in\mathbb{Z}[X]$, then $\varphi_f$ is an $\exists$-$\emptyset$-formula.

\begin{Lemma}\label{idea}
If 
$U,T\subseteq\mathcal{O}$ are such that 
$\mathfrak{m}\subseteq U$
and $T$ meets all residue classes (i.e.~$\bar{T}=F$), then $\mathcal{O}=U+T$. 
\end{Lemma}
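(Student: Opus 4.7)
The plan is to prove the two inclusions $\mathcal{O} = U+T$ separately, and both are essentially immediate from the hypotheses.

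For the inclusion $U+T \subseteq \mathcal{O}$, I would simply note that $U,T \subseteq \mathcal{O}$ by hypothesis, and $\mathcal{O}$ is closed under addition, so any sum $u+t$ with $u \in U$, $t \in T$ lies in $\mathcal{O}$.

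For the reverse inclusion $\mathcal{O} \subseteq U+T$, the idea is to use $T$ to correct the residue class and then absorb the error into $U$. Given $x \in \mathcal{O}$, consider $\bar{x} \in F$. Since $\bar{T} = F$, there is some $t \in T$ with $\bar{t} = \bar{x}$, which means $x - t \in \mathfrak{m}$. By the hypothesis $\mathfrak{m} \subseteq U$, we get $x - t \in U$, and therefore $x = (x-t) + t \in U + T$.

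There is no real obstacle here; the lemma is a simple decomposition that packages the content of Lemma \ref{lemmag} with a hypothesis on $T$ that will presumably be supplied by another argument (e.g.\ via $x^q - x$ in the finite residue field case, or a suitable Frobenius-style polynomial in the PAC case). The only thing to be slightly careful about is to state the correspondence $\bar{t} = \bar{x} \Leftrightarrow x - t \in \mathfrak{m}$, which is just the definition of the residue map.
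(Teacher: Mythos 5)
Your proof is correct and follows essentially the same argument as the paper: pick $t\in T$ with $\bar t=\bar x$, so $x-t\in\mathfrak{m}\subseteq U$, giving $x\in U+T$. You also spell out the trivial inclusion $U+T\subseteq\mathcal{O}$, which the paper leaves implicit.
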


\begin{proof}
If for $x\in\mathcal{O}$ we let $t\in T$ with $\bar{t}=\bar{x}$, then
$x=u+t$ with $u:=x-t\in\mathfrak{m}\subseteq U$.
\end{proof}

Thus, if $\varphi$ defines $U$ and $\psi$ defines $T$, then
$$
 \eta(x)\equiv(\exists u,t)(x=u+t\wedge\varphi(u)\wedge\psi(t))
$$
defines $\mathcal{O}$. 
Note that if $\varphi$ and $\psi$ are $\exists$-$\emptyset$-formulas, then so is $\eta$.

We now give a first generalization of \cite[Theorem 1.1]{AnscombeKoenigsmann}.
We denote by $F_0$ the prime field of $F$ and by 
$F_{\rm alg}$ the algebraic closure of $F_0$ in $F$.
By abuse of notation we will consider polynomials $f\in\mathbb{Z}[X]$ as elements of $\mathcal{O}[X]$
via the canonical homomorphism $\mathbb{Z}\rightarrow\mathcal{O}$.

\begin{Lemma}\label{lem:trace}
For every prime $p$ and positive integer $m$ there exists
$f\in\mathbb{F}_p[X]$ monic, separable and irreducible of degree $m$ 
with $f'(0)\neq 0$.
\end{Lemma}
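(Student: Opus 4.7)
The plan is to realize $f$ as the minimal polynomial over $\mathbb{F}_p$ of a suitably chosen element $\alpha\in\mathbb{F}_{p^m}$. Such an $f$ is automatically monic and irreducible of degree $m$ whenever $\alpha$ generates $\mathbb{F}_{p^m}/\mathbb{F}_p$, and it is automatically separable since $\mathbb{F}_p$ is a perfect field, so the only real content is the condition $f'(0)\neq 0$. The key step is to translate this into a condition on $\alpha$: factoring $f(X)=\prod_{i=0}^{m-1}(X-\alpha^{p^i})$ and logarithmically differentiating at $0$ gives
\[
\frac{f'(0)}{f(0)}=-\sum_{i=0}^{m-1}\alpha^{-p^i}=-\mathrm{Tr}_{\mathbb{F}_{p^m}/\mathbb{F}_p}(\alpha^{-1}),
\]
and since $\alpha\neq 0$ forces $f(0)\neq 0$, we obtain $f'(0)\neq 0$ if and only if $\mathrm{Tr}(\alpha^{-1})\neq 0$. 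Because $\alpha\mapsto\alpha^{-1}$ permutes the generators of $\mathbb{F}_{p^m}/\mathbb{F}_p$, the task reduces to exhibiting some generator $\beta$ of $\mathbb{F}_{p^m}/\mathbb{F}_p$ with $\mathrm{Tr}(\beta)\neq 0$.

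For $m=1$ take $\beta=1$. For $m\geq 2$, let $H=\ker(\mathrm{Tr})$, an $\mathbb{F}_p$-hyperplane of cardinality $p^{m-1}$, and let $M=\bigcup_{d\mid m,\,d<m}\mathbb{F}_{p^d}$ be the union of proper intermediate subfields, so that the generators of $\mathbb{F}_{p^m}/\mathbb{F}_p$ are exactly $\mathbb{F}_{p^m}\smallsetminus M$. It then suffices to show $\mathbb{F}_{p^m}\smallsetminus H\not\subseteq M$, i.e.\ the strict inequality $(p-1)p^{m-1}=|\mathbb{F}_{p^m}\smallsetminus H|>|M|$. Since every proper divisor of $m$ satisfies $d\leq m/2$, a short case analysis verifies this for all $(p,m)$ with $m\geq 2$ except the single edge case $(p,m)=(2,2)$, where both sides equal $2$; that case is settled by the explicit example $f(X)=X^2+X+1$.

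The main obstacle is precisely this tight edge case, where the crude cardinality bound degenerates to equality. Fortunately the explicit polynomial $X^2+X+1$ resolves it immediately. One could also bypass the whole case $m=2$ uniformly by observing that any $\alpha\in\mathbb{F}_{p^2}^\times$ of full multiplicative order $p^2-1$ has $\mathrm{Tr}(\alpha)\neq 0$: otherwise $\alpha+\alpha^p=0$ would force $\alpha^{p-1}=-1$, so the order of $\alpha$ would divide $2(p-1)$, contradicting primitivity.
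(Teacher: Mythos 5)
Your proof is correct, and it shares with the paper's proof the same structural reduction: take $f$ to be the minimal polynomial of $\alpha^{-1}$ for a degree-$m$ element $\alpha\in\mathbb{F}_{p^m}$ with $\mathrm{Tr}_{\mathbb{F}_{p^m}/\mathbb{F}_p}(\alpha)\neq 0$, using the identity relating $f'(0)$ to the trace (you obtain it by logarithmic differentiation, the paper via $f'(0)=\pm\mathrm{Tr}(\alpha)/\mathrm{N}(\alpha)$). Where you genuinely diverge is the \emph{existence} of such an $\alpha$: the paper simply invokes the normal basis theorem. A normal basis generator $\alpha$ has $m$ linearly independent conjugates, so it automatically has degree $m$, and $\mathrm{Tr}(\alpha)$ is a nontrivial $\mathbb{F}_p$-linear combination of those independent conjugates and hence nonzero — one line, no cases. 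You instead run an explicit counting argument, comparing $|\mathbb{F}_{p^m}\smallsetminus\ker(\mathrm{Tr})|=(p-1)p^{m-1}$ with the size of the union of proper subfields, and then patch the single tight case $(p,m)=(2,2)$ by hand. Your route is more elementary (it avoids the normal basis theorem and in fact re-proves a weak consequence of it), which is a legitimate trade-off, but at the cost of the case analysis and the degenerate edge case; the paper's invocation of normal bases delivers the needed element uniformly and is shorter. Both are valid proofs of the lemma.
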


\begin{proof}
Let $q=p^m$.
Since $\mathbb{F}_q/\mathbb{F}_p$ is Galois it has a normal basis, i.e.~there exists $\alpha\in\mathbb{F}_q$
such that the conjugates of $\alpha$ form an $\mathbb{F}_p$-basis of $\mathbb{F}_q$.
In particular, $\alpha$ has degree $m$ and non-zero trace over $\mathbb{F}_p$.
Let $f\in\mathbb{F}_p[X]$ be the minimal polynomial of $\alpha^{-1}$.
Then $f$ is irreducible of degree $m$ and $f'(0)=\pm{\rm Tr}_{\mathbb{F}_{q}/\mathbb{F}_p}(\alpha)/{\rm N}_{\mathbb{F}_{q}/\mathbb{F}_p}(\alpha)\neq 0$.
\end{proof}

\begin{Lemma}\label{lem:finite}
If $F$ is finite,
then there exist 
$f\in F_0[X]$ monic, separable and irreducible which has no zero in $F$,
and $a\in F$ with $f'(a)\neq 0$.
\end{Lemma}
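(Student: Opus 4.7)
The plan is to reduce the statement to Lemma~\ref{lem:trace}. Write $F=\mathbb{F}_q$ with $q=p^n$ and $p$ the characteristic of $F$, so that $F_0=\mathbb{F}_p$. The strategy is to choose the degree $m$ so that (i) any irreducible polynomial of degree $m$ in $\mathbb{F}_p[X]$ automatically has no root in $F$, and (ii) Lemma~\ref{lem:trace} hands us such an $f$ together with a distinguished element (namely $0$) at which $f'$ does not vanish; then $a=0\in F$ will do.

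The first step is the standard divisibility observation: an irreducible $f\in\mathbb{F}_p[X]$ of degree $m$ has its roots in $\mathbb{F}_{p^m}$, and these roots lie in $F=\mathbb{F}_{p^n}$ precisely when $\mathbb{F}_{p^m}\subseteq\mathbb{F}_{p^n}$, i.e.\ when $m\mid n$. So it suffices to pick any $m\geq 2$ with $m\nmid n$; the choice $m=n+1$ works for every $n\geq 1$ (one could equally well take any prime exceeding $n$).

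The second step is to invoke Lemma~\ref{lem:trace} with this $m$ to obtain $f\in\mathbb{F}_p[X]=F_0[X]$ that is monic, separable, and irreducible of degree $m$ and satisfies $f'(0)\neq 0$. By the first step, $f$ has no root in $F$. Setting $a=0\in F$, we have $f'(a)=f'(0)\neq 0$ in $F_0\subseteq F$, so $(f,a)$ is as required.

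I do not anticipate any real obstacle: the only content is the divisibility criterion for an irreducible polynomial over $\mathbb{F}_p$ to split over $\mathbb{F}_{p^n}$, together with the already-proved trace lemma, and the element $a=0$ is given to us for free by that lemma.
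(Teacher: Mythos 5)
Your proof is correct and matches the paper's argument exactly: pick $m\nmid[F:F_0]$, apply Lemma~\ref{lem:trace} to get $f$ of degree $m$ with $f'(0)\neq0$, and take $a=0$. You merely spell out the divisibility criterion that the paper leaves implicit.
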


\begin{proof}
Identify $F_0=\mathbb{F}_p$,
let $m$ be any positive integer that does not divide $[F:F_0]$, choose $f$ of degree $m$ as in Lemma~\ref{lem:trace},
and let $a=0$.
\end{proof}

\begin{Theorem}\label{thm:finite}
Let $K$ be a henselian valued field with valuation ring $\mathcal{O}$ and residue field $F$.
If $F$ is finite, then there exists an $\exists$-$\emptyset$-definition of $\mathcal{O}$ in $K$.
\end{Theorem}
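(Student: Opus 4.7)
The plan is to combine Lemma~\ref{lemmag} (which produces an $\exists$-$\emptyset$-definable set $U$ sandwiched between $\mathfrak{m}$ and $\mathcal{O}$) with Lemma~\ref{idea} (which assembles $\mathcal{O}$ as $U+T$ once we have a definable set of residue class representatives $T$). The only missing ingredient is therefore an $\exists$-$\emptyset$-definable set $T\subseteq\mathcal{O}$ with $\bar{T}=F$, together with a polynomial $f\in\mathbb{Z}[X]$ satisfying the hypotheses of Lemma~\ref{lemmag}.

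For the first piece, I would invoke Lemma~\ref{lem:finite} to obtain a monic, separable, irreducible $f_0\in F_0[X]=\mathbb{F}_p[X]$ with no zero in $F$ and with $f_0'(0)\neq 0$ (so that we may take $a=0$). Lifting $f_0$ coefficient-wise to a monic $f\in\mathbb{Z}[X]$ and viewing it in $\mathcal{O}[X]$ via the canonical map, the reduction $\bar{f}$ is again $f_0$, hence has no zero in $F$, and $f'(0)$ reduces to $f_0'(0)\neq 0$, so $f'(0)\notin\mathfrak{m}$. Lemma~\ref{lemmag} then yields $\mathfrak{m}\subseteq U\subseteq\mathcal{O}$ for $U=f(K)^{-1}-f(K)^{-1}$, and $U$ is cut out by the $\exists$-$\emptyset$-formula $\varphi_f$.

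For the residue class representatives, I would set $q=|F|$ and take $T:=\{x\in K:x^q-x=0\}$, which is defined by a quantifier-free $\emptyset$-formula. Since $X^q-X$ is monic with coefficients in $\mathcal{O}$, any root in $K$ is integral over $\mathcal{O}$ and hence lies in $\mathcal{O}$, so $T\subseteq\mathcal{O}$. Over $F$, the polynomial $X^q-X$ splits into $q$ distinct linear factors, so Hensel's lemma lifts each simple root to $\mathcal{O}$, giving $\bar{T}=F$. Lemma~\ref{idea} then yields $\mathcal{O}=U+T$, so
\[
 \eta(x)\equiv (\exists u,t)\bigl(x=u+t\wedge\varphi_f(u)\wedge t^q-t=0\bigr)
\]
is an $\exists$-$\emptyset$-formula defining $\mathcal{O}$ in $K$.

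Since all the serious work is carried out by the preceding lemmas, there is no real obstacle; the only points to check are routine, namely that lifting $f_0\in\mathbb{F}_p[X]$ to $\mathbb{Z}[X]$ preserves monicity and the non-vanishing of $f'(0)$ modulo $\mathfrak{m}$, and that Hensel's lemma applies to $X^q-X$. Note that the formula $\eta$ depends on $K$ only through the two integers $p=\mathrm{char}(F)$ and $q=|F|$ (and the integer $m$ chosen in Lemma~\ref{lem:finite} not dividing $[F:F_0]$); this dependence is precisely what Section~\ref{sec:uniform} will have to remove in order to obtain the uniform statement of Theorem~\ref{thm:intro2}.
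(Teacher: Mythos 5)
Your proposal is correct and is essentially the paper's own argument: choose $f$ via Lemma~\ref{lem:finite}, lift it to $\mathbb{Z}[X]$ to get $U=\varphi_f(K)$ with $\mathfrak{m}\subseteq U\subseteq\mathcal{O}$ via Lemma~\ref{lemmag}, take $T$ to be the roots of $X^q-X$ (a Hensel-lifted set of representatives), and combine with Lemma~\ref{idea}. The only cosmetic difference is that you fix $a=0$ explicitly (as in the proof of Lemma~\ref{lem:finite}) rather than quoting the existence of some $a$ from its statement.
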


\begin{proof}
If $F=\mathbb{F}_q$, let $g=X^q-X\in\mathbb{Z}[X]$ and $\psi(x)\equiv (g(x)=0)$.
Since $\bar{g}'=-1$, the assumption that $\mathcal{O}$ is henselian gives that $T:=\psi(K)\subseteq\mathcal{O}$
is a set of representatives of $F$. In particular, it meets all residue classes.
Choose $f\in F_0[X]$ as in Lemma \ref{lem:finite}
and let $\tilde{f}\in\mathbb{Z}[X]$ be a monic lift of $f$.
Since there exists $a\in F$ with $f'(a)\neq 0$, 
a lift $\tilde{a}\in\mathcal{O}$ of $a$ satisfies $\tilde{f}'(\tilde{a})\notin\mathfrak{m}$.
Let $\varphi\equiv\varphi_f$.
By Lemma \ref{lemmag},
$U:=\varphi(K)$ satisfies $\mathfrak{m}\subseteq U\subseteq\mathcal{O}$.
Therefore, Lemma \ref{idea}
shows that $\eta(K)=\mathcal{O}$. 
\end{proof}

\section{Pseudo-algebraically closed residue fields}

\noindent
We now consider assumptions on the residue field $F$ under which we can
define a set $T$ as in Lemma \ref{idea}.
For basics on pseudo-algebraically closed (PAC) fields we refer to \cite[Chapter~11]{FJ}.
For $d\in\mathbb{N}$ we fix the constant $c(d)=(2d-1)^4$.

\begin{Lemma}\label{PAC1}
Let $f\in F[X]$ be non-constant and square-free (over the algebraic closure). 
Then $F=f(F)f(F)\cup\{0\}$ if 
$F$ is PAC or $F$ is finite with $|F|>c({\rm deg}(f))$.
\end{Lemma}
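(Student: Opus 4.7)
The plan is to show that for every $c\in F^{\times}$, the affine plane curve $V_c\colon f(X)f(Y)=c$ has an $F$-rational point; this immediately gives $c\in f(F)f(F)$. The main algebraic step is the absolute irreducibility of $g(X,Y):=f(X)f(Y)-c$ in $\bar F[X,Y]$. I would argue by contradiction: suppose $g=AB$ nontrivially over $\bar F$, and let $\alpha_1,\dots,\alpha_d$ be the distinct roots of $f$. Specialising $Y=\alpha_i$ turns $g$ into the nonzero constant $-c$, so $A(X,\alpha_i)$ must be a nonzero constant in $\bar F^{\times}$; symmetrically $A(\alpha_j,Y)$ is a nonzero constant. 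Evaluating at $(\alpha_j,\alpha_i)$ shows that all these constants equal a single $e\in\bar F^{\times}$. Then $A(X,Y)-e$ vanishes on the $2d$ lines $\{X=\alpha_j\}\cup\{Y=\alpha_i\}$, so $f(X)f(Y)$ divides $A(X,Y)-e$ in $\bar F[X,Y]$. Since $\deg(A-e)\le\deg g=2d=\deg(f(X)f(Y))$, a degree comparison forces $A-e=\lambda f(X)f(Y)$ for some $\lambda\in\bar F$. If $\lambda=0$ then $A$ is a constant, and if $\lambda\ne 0$ then using $A\mid g$ and matching degrees forces $A=\mu g$ and hence $B=\mu^{-1}$ constant; either way this contradicts nontriviality.

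With absolute irreducibility in hand, the PAC case is immediate from the defining property of PAC fields (every absolutely irreducible variety has a rational point). For the finite case $F=\mathbb{F}_q$ with $q>c(d)=(2d-1)^4$, I would apply the Hasse--Weil bound in the Aubry--Perret form to the projective closure $\overline{V_c}$: a geometrically integral projective plane curve of degree $D=2d$ has arithmetic genus at most $(D-1)(D-2)/2$, so
\[
\bigl|\overline{V_c}(\mathbb{F}_q)\bigr|\ge q+1-(D-1)(D-2)\sqrt{q}.
\]
Removing the at most $D$ points at infinity yields
\[
|V_c(\mathbb{F}_q)|\ge q+1-(D-1)(D-2)\sqrt{q}-D,
\]
which is positive once $\sqrt{q}>(D-1)^{2}$, i.e.\ $q>(2d-1)^{4}=c(d)$.

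The main obstacle is the numerical accounting in the finite case: to land exactly on the stated bound $(2d-1)^4$ one cannot afford to pay an extra factor for singularities, so one must use a version of Hasse--Weil that applies to geometrically integral (possibly singular) projective curves, working with arithmetic rather than geometric genus. The absolute-irreducibility argument itself is elementary once one notices that specialising to the roots of $f$ collapses $g$ to a nonzero constant and that the resulting vanishing locus of $A-e$ exhausts the degree budget.
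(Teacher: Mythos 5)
Your proof is correct and follows essentially the same strategy as the paper: reduce to the absolute irreducibility of $f(X)f(Y)-c$, then conclude via the PAC property in one case and a Hasse--Weil type point count in the other. The only difference is that you spell out self-contained arguments for the absolute irreducibility and the numerical estimate, where the paper simply cites Heinemann--Prestel (Proposition~1.1) and Fried--Jarden (Corollary~5.4.2).
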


\begin{proof}
Let $0\neq c\in F$.
One checks that the polynomial $f(X)f(Y)-c\in F[X,Y]$ is absolutely irreducible, cf.~\cite[Proposition 1.1]{HP}. 
Thus, if $F$ is PAC we can conclude that there exist $x,y\in F$ with $f(x)f(y)-c=0$, i.e.~$c\in f(F)f(F)$.
If $F$ is finite with $|F|>c({\rm deg}(f))$ we come to the same conclusion
by applying the Hasse-Weil bound, cf.~\cite[Corollary 5.4.2]{FJ}.
\end{proof}

\begin{Lemma}\label{PAC2}
Let $f\in \mathcal{O}[X]$ be monic such that $\bar{f}$ is square-free and has no zero in $F$.
Then $T:=f(K)^{-1}f(K)^{-1}\cup\{0\}\subseteq\mathcal{O}$.
If in addition $F$ is PAC or finite with $|F|>c({\rm deg}(f))$,
then $T$ meets all residue classes.
\end{Lemma}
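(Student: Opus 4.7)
The plan is to prove the two assertions in turn, both reducing to the already-established Lemma~\ref{PZ} together with Lemma~\ref{PAC1}.

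For the inclusion $T\subseteq\mathcal{O}$, I would simply invoke Lemma~\ref{PZ}a: since $f$ is monic and $\bar{f}$ has no zero in $F$, we get $f(K)^{-1}\subseteq\mathcal{O}$. The product of any two elements of $\mathcal{O}$ lies in $\mathcal{O}$, and $0\in\mathcal{O}$, so the whole of $f(K)^{-1}f(K)^{-1}\cup\{0\}$ is contained in $\mathcal{O}$.

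For the surjectivity $\bar{T}=F$, the class of $0$ is hit by $0\in T$, so the task reduces to showing that every $c\in F^\times$ lies in $\bar{T}$. The strategy is to invert: I aim to write $c^{-1}=\bar{f}(\bar{x})\bar{f}(\bar{y})$ for some $\bar{x},\bar{y}\in F$. Note that because $f$ is monic of positive degree, $\bar{f}$ is non-constant, and by hypothesis it is square-free; thus Lemma~\ref{PAC1} applies under either assumption (PAC, or finite with $|F|>c(\deg f)$), giving $F=\bar{f}(F)\bar{f}(F)\cup\{0\}$. Applied to the nonzero element $c^{-1}$, this produces the desired factorization.

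To finish, lift $\bar{x},\bar{y}$ to $x,y\in\mathcal{O}$. Since $\bar{f}$ has no zero in $F$, both $\overline{f(x)}=\bar{f}(\bar{x})$ and $\overline{f(y)}=\bar{f}(\bar{y})$ are nonzero, so $f(x),f(y)\in\mathcal{O}^\times$; hence $f(x)^{-1}f(y)^{-1}\in T$ and its residue equals $(\bar{f}(\bar{x})\bar{f}(\bar{y}))^{-1}=(c^{-1})^{-1}=c$, as desired. No serious obstacle appears: the hypothesis that $\bar{f}$ is square-free and has no zero in $F$ is exactly what is needed to make Lemma~\ref{PAC1} applicable and to ensure that the relevant values $f(x)$, $f(y)$ are units in $\mathcal{O}$.
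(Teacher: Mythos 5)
Your proof is correct and follows essentially the same route as the paper: Lemma~\ref{PZ}a for the inclusion $T\subseteq\mathcal{O}$, and Lemma~\ref{PAC1} for surjectivity onto the residue classes, followed by lifting. The only difference is presentational: the paper compresses the lifting and unit-verification steps into a single chain of inclusions $F^\times\subseteq(\bar{f}(F)\bar{f}(F))^{-1}\subseteq(\overline{f(\mathcal{O})}\cdot\overline{f(\mathcal{O})})^{-1}\subseteq\overline{f(K)^{-1}f(K)^{-1}}$, whereas you unpack them explicitly (including the helpful observation that $\bar{f}$ is non-constant because $f$ is monic of positive degree, which the paper leaves implicit).
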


\begin{proof}
By Lemma \ref{PZ}a, $f(K)^{-1}\subseteq\mathcal{O}$, hence $T\subseteq\mathcal{O}$.
If $F$ is PAC or finite with $|F|>c({\rm deg}(f))$, then,
since $F^\times\subseteq \bar{f}(F)\bar{f}(F)$ by Lemma \ref{PAC1}, 
also 
$$
 F^\times\subseteq (\bar{f}(F)\bar{f}(F))^{-1}\subseteq(\overline{f(\mathcal{O})}\cdot\overline{f(\mathcal{O})})^{-1}\subseteq \overline{f(K)^{-1}f(K)^{-1}},
$$ 
hence $T$ satisfies $F=\overline{T}$.
\end{proof}

Clearly, the set $T$ can be defined in $K$ by the $\exists$-formula
$$
 \psi_f(x) \equiv (\exists y,z,y_1,z_1)(x=0\vee(x=y_1z_1\wedge y_1f(y)=1\wedge z_1f(z)=1)).
$$
Let
$$
 \eta_f(x)\equiv(\exists u,t)(x=u+t\wedge\varphi_f(u)\wedge\psi_f(t)).
$$

\begin{Proposition}\label{prop}
Let $f\in \mathcal{O}[X]$ be monic such that $\bar{f}$ is square-free and has no zero in $F$.
Then $\eta_f(K)\subseteq\mathcal{O}$.
If in addition
there exists $a\in \mathcal{O}$ such that $f'(a)\notin\mathfrak{m}$
and $F$ is PAC or finite with $|F|>c({\rm deg}(f))$, then
$\eta_f(K)=\mathcal{O}$.
\end{Proposition}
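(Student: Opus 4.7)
The proposition is essentially an assembly of the three preparatory lemmas, so my plan is to unwind the definition of $\eta_f$ and apply each lemma to the corresponding piece. The formula $\eta_f$ defines the set $\eta_f(K) = U + T$, where $U = \varphi_f(K) = f(K)^{-1}-f(K)^{-1}$ is the set from Lemma \ref{lemmag} and $T = \psi_f(K) = f(K)^{-1}f(K)^{-1}\cup\{0\}$ is the set from Lemma \ref{PAC2}. So the whole task reduces to tracking inclusions of $U$ and $T$.

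For the first (unconditional) claim $\eta_f(K)\subseteq\mathcal{O}$, I would invoke Lemma \ref{PZ}a, which gives $f(K)^{-1}\subseteq\mathcal{O}$. This immediately yields $U\subseteq\mathcal{O}$ (differences of elements of $\mathcal{O}$) and the first statement of Lemma \ref{PAC2} gives $T\subseteq\mathcal{O}$. Hence $\eta_f(K)=U+T\subseteq\mathcal{O}$. Note this part uses only that $\bar{f}$ has no zero in $F$; neither the square-free assumption nor the hypothesis on $a$ nor the assumption on $F$ is needed here, which matches the way the proposition is phrased.

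For the second claim $\eta_f(K)=\mathcal{O}$, I would turn on the extra hypotheses. The existence of $a\in\mathcal{O}$ with $f'(a)\notin\mathfrak{m}$ is exactly the assumption of Lemma \ref{lemmag}, which gives $\mathfrak{m}\subseteq U$. The assumption that $F$ is PAC, or finite with $|F|>c(\deg(f))$, together with the square-freeness of $\bar{f}$, is exactly what is needed in the second part of Lemma \ref{PAC2} to conclude $\bar{T}=F$, i.e.\ $T$ meets every residue class. Now Lemma \ref{idea} applies to $U$ and $T$ and gives $\mathcal{O}=U+T=\eta_f(K)$, together with the inclusion $\eta_f(K)\subseteq\mathcal{O}$ already established.

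Honestly there is no real obstacle here; the work was done in Lemmas \ref{PZ}, \ref{lemmag}, \ref{idea} and \ref{PAC2}, and the proposition is just the packaging step. The only thing to be slightly careful about is to match the hypotheses in each invocation: in particular one should not try to use Lemma \ref{lemmag} or the second half of Lemma \ref{PAC2} in the proof of the first assertion, since those require the extra hypotheses that the proposition explicitly splits off.
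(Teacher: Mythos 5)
Your proof is correct and follows the paper's argument exactly: establish $U,T\subseteq\mathcal{O}$ via Lemma \ref{PZ}a and Lemma \ref{PAC2} for the first claim, then use Lemma \ref{lemmag}, the second half of Lemma \ref{PAC2}, and Lemma \ref{idea} under the additional hypotheses for the second. The proposition is indeed just the assembly of the earlier lemmas, and you have assembled them in the same way.
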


\begin{proof}
Let $U=\varphi_f(K)$, so $U\subseteq\mathcal{O}$ by Lemma \ref{PZ}.
By Lemma \ref{PAC2}, $T:=\psi_f(K)\subseteq\mathcal{O}$,
so $\eta_f(K)=U+T\subseteq\mathcal{O}$.
If in addition
there exists $a\in \mathcal{O}$ such that $f'(a)\notin\mathfrak{m}$
and
 $F$ is PAC or finite with $|F|>c({\rm deg}(f))$, then
Lemma \ref{lemmag} gives that $\mathfrak{m}\subseteq U$, and
Lemma \ref{PAC2} gives that $T$ meets all residue classes,
hence $\eta_f(K)=\mathcal{O}$ by Lemma \ref{idea}.
\end{proof}

\begin{Lemma}\label{finite}
If $F$ is infinite and $F_{\rm alg}$ is not algebraically closed, then there exist 
$f\in F_0[X]$ monic, separable, and irreducible which has no zero in $F$,
and $a\in F$ with $f'(a)\neq 0$.
\end{Lemma}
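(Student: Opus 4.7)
The plan is to reduce the existence of the desired polynomial to a question about the absolute Galois group of the prime field $F_0$. Observe first that any root in $F$ of a polynomial $f\in F_0[X]$ is algebraic over $F_0$ and hence already lies in $F_{\rm alg}$, so it suffices to produce $f\in F_0[X]$ monic, separable and irreducible with no zero in $F_{\rm alg}$. The companion element $a\in F$ with $f'(a)\neq 0$ then comes for free: separability forces $f'$ to be a nonzero polynomial of degree strictly less than $\deg(f)$, and hence $f'$ has only finitely many zeros, while $F$ is assumed infinite.

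Fix an algebraic closure $\bar F_0$ of $F_0$, identify $F_{\rm alg}$ with its image in $\bar F_0$, and write $G:={\rm Gal}(\bar F_0/F_0)$ and $H:={\rm Gal}(\bar F_0/F_{\rm alg})$. The hypothesis that $F_{\rm alg}$ is not algebraically closed says precisely that $H\neq\{1\}$. I would pick some $h\in H\smallsetminus\{1\}$ and then exploit that, in a profinite group, open normal subgroups form a neighborhood basis of the identity: there exists an open normal subgroup $N$ of $G$ with $h\notin N$, so in particular $H\not\subseteq N$. Setting $L:=\bar F_0^{N}$ yields a finite Galois extension of $F_0$, and since $F_0$ is a prime field (hence perfect) the primitive element theorem gives $L=F_0(\alpha)$ for some $\alpha\in L$. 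I would then take $f:={\rm min}_{F_0}(\alpha)\in F_0[X]$, which is by construction monic, irreducible and separable.

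The remaining step is to verify that no root of $f$ lies in $F_{\rm alg}$. Every root of $f$ in $\bar F_0$ has the form $\sigma(\alpha)$ for some $\sigma\in G$, and such a root lies in $F_{\rm alg}$ iff it is fixed by $H$, iff $\sigma^{-1}H\sigma\subseteq {\rm Stab}_G(\alpha)=N$. The only non-routine point in the argument is precisely here: the \emph{normality} of $N$ collapses this condition to the single statement $H\subseteq N$, which was arranged to fail. For a merely open (non-normal) subgroup $M$ the various conjugates $\sigma^{-1}H\sigma$ could a priori all sneak into $M$ even while $H$ itself does not, so insisting from the outset that $N$ be normal is what makes the conjugation bookkeeping trivialize and yields the desired $f$.
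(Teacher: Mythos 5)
Your proof is correct and follows the same strategy as the paper's: construct a monic irreducible $f\in F_0[X]$ with no root in $F_{\rm alg}$ (hence none in $F$), then use separability (from $F_0$ being perfect) together with the infinitude of $F$ to produce $a$ with $f'(a)\neq 0$. The paper merely asserts the existence of such an $f$; your argument via an open normal subgroup $N$ of ${\rm Gal}(\bar F_0/F_0)$ with ${\rm Gal}(\bar F_0/F_{\rm alg})\not\subseteq N$ correctly supplies the Galois-theoretic justification the paper leaves implicit, and the normality of $N$ is indeed exactly what rules out the possibility that some other conjugate of the primitive element lands in $F_{\rm alg}$.
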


\begin{proof}
Since $F_{\rm alg}$ is not algebraically closed, there exists
a monic irreducible $f\in F_0[X]$ which has no zero in $F_{\rm alg}$, hence in $F$.
Since $F_0$ is perfect, $f$ is separable, hence $f'\neq 0$.
Therefore, since $F$ is infinite, there exists $a\in F$ with $f'(a)\neq 0$.
\end{proof}

\begin{Theorem}\label{thm:PAC}
Let $K$ be a henselian valued field with valuation ring $\mathcal{O}$ and residue field $F$.
If $F$ is pseudo-algebraically closed and $F_{\rm alg}$ is not algebraically closed,
then there exists an $\exists$-$\emptyset$-definition of $\mathcal{O}$ in $K$.
\end{Theorem}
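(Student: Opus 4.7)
The plan is to reduce the theorem directly to Proposition \ref{prop} by exhibiting a polynomial $\tilde f\in\mathbb{Z}[X]$ (so that the resulting formula $\eta_{\tilde f}$ is automatically an $\exists$-$\emptyset$-formula) which satisfies the hypotheses in the PAC branch of that proposition: $\overline{\tilde f}$ square-free with no zero in $F$, and some $\tilde a\in\mathcal{O}$ with $\tilde f'(\tilde a)\notin\mathfrak{m}$.

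First I would dispatch the only non-trivial preliminary, namely that $F$ is infinite. This is a standard fact about PAC fields (see \cite{FJ}, Chapter~11): a finite field is not PAC, since the Hasse--Weil bound forces plenty of absolutely irreducible curves to have no rational points over a field of small cardinality. Combined with the hypothesis that $F_{\rm alg}$ is not algebraically closed, this lets me apply Lemma \ref{finite} to produce $f\in F_0[X]$ monic, separable and irreducible with no zero in $F$, together with an element $a\in F$ satisfying $f'(a)\neq 0$.

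Next I would lift: pick any monic $\tilde f\in\mathbb{Z}[X]$ reducing to $f$ (under the canonical composition $\mathbb{Z}\to\mathcal{O}\to F$ which factors through $F_0$) and any $\tilde a\in\mathcal{O}$ reducing to $a$. Then $\overline{\tilde f}=f$, viewed as a polynomial over $F$ via $F_0\hookrightarrow F$, is irreducible and separable over $F_0$, hence square-free in $F[X]$, and by construction it has no zero in $F$. Furthermore $\overline{\tilde f'(\tilde a)}=f'(a)\neq 0$, so $\tilde f'(\tilde a)\notin\mathfrak{m}$.

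With these properties in hand, Proposition \ref{prop} applied to $\tilde f$ (PAC case) immediately gives $\eta_{\tilde f}(K)=\mathcal{O}$, and because $\tilde f\in\mathbb{Z}[X]$ the formulas $\varphi_{\tilde f}$ and $\psi_{\tilde f}$, and therefore $\eta_{\tilde f}$, contain no parameters. This is the required $\exists$-$\emptyset$-definition. The only point where I expect any real thought is the preliminary observation that PAC forces $F$ infinite, so that Lemma \ref{finite} is actually applicable; everything else is a mechanical assembly of results already proved.
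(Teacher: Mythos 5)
Your proof is correct and follows essentially the same route as the paper: apply Lemma \ref{finite} to get a suitable $f\in F_0[X]$ and $a\in F$, lift to $\tilde f\in\mathbb{Z}[X]$ and $\tilde a\in\mathcal{O}$, and invoke the PAC branch of Proposition \ref{prop}. The only addition is that you explicitly record the (standard, and indeed needed) fact that PAC fields are infinite so that Lemma \ref{finite} applies, a point the paper leaves implicit.
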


\begin{proof}
Choose $f\in F_0[X]$ as in Lemma \ref{finite}
and let $\tilde{f}\in\mathbb{Z}[X]$ be a monic lift of $f$.
Since there exists $a\in F$ with $f'(a)\neq 0$, 
a lift $\tilde{a}\in\mathcal{O}$ of $a$ satisfies $\tilde{f}'(\tilde{a})\notin\mathfrak{m}$.
By Proposition~\ref{prop},
$\eta_{\tilde{f}}(K)=\mathcal{O}$.
\end{proof}

\begin{Corollary}
Let $K$ be a henselian valued field with valuation ring $\mathcal{O}$ and residue field $F$.
If $F$ is pseudo-real closed and $F_{\rm alg}$ is neither real closed nor algebraically closed,
then there exists an $\exists$-$\emptyset$-definition of $\mathcal{O}$ in $K$.
\end{Corollary}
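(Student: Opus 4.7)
The plan is to mirror the proof of Theorem~\ref{thm:PAC}, adapting the key lemmas to the PRC setting. If $F$ is not formally real, then $F$ has no orderings, so it has no real closures; the PRC condition then reduces to the PAC condition, and combined with the hypothesis that $F_{\rm alg}$ is not algebraically closed, Theorem~\ref{thm:PAC} applies directly. I therefore assume $F$ is formally real, so that $F_0 = \mathbb{Q}$ and $F_{\rm alg}$ is a formally real subfield of $\overline{\mathbb{Q}}$.

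The key step is a PRC analog of Lemma~\ref{PAC1}: if $f \in F[X]$ is non-constant, square-free, and of odd degree, then $F = f(F)f(F) \cup \{0\}$. The proof proceeds as in Lemma~\ref{PAC1}: for any $c \in F^\times$, $f(X)f(Y) - c$ is absolutely irreducible by \cite[Proposition~1.1]{HP}, and over every real closure $R$ of $F$ the odd-degree polynomial $f$ maps $R$ onto $R$, so picking $y_0 \in R$ with $f(y_0), f'(y_0) \neq 0$ and solving $f(x_0) = c/f(y_0)$ yields a smooth $R$-point; the PRC property then delivers an $F$-point. Combined with Lemmas~\ref{PZ}, \ref{lemmag}, and~\ref{idea}, this gives a PRC analog of Proposition~\ref{prop}: for $f \in \mathcal{O}[X]$ monic with $\bar f$ of odd degree, square-free, and without zero in $F$, and $f'(a) \notin \mathfrak{m}$ for some $a \in \mathcal{O}$, we have $\eta_f(K) = \mathcal{O}$.

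It remains, as in Theorem~\ref{thm:PAC}, to exhibit a monic irreducible $f \in F_0[X] = \mathbb{Q}[X]$ of odd degree having no root in $F_{\rm alg}$; then $f$ is automatically separable (since $\mathbb{Q}$ is perfect) and the existence of $a \in F$ with $f'(a) \neq 0$ is automatic as $F$ is infinite. This is the main obstacle. The idea: since $F_{\rm alg}$ is formally real but not real closed, its real closure contains an element $\beta$ of odd degree $> 1$ over $F_{\rm alg}$, lying in $\overline{\mathbb{Q}}$; since $F_{\rm alg}$ is also not algebraically closed, a Galois-theoretic argument analogous to the one implicit in Lemma~\ref{finite} allows me to arrange that the minimal polynomial of $\beta$ over $\mathbb{Q}$ is of odd degree and has no root in $F_{\rm alg}$. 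The crucial point in this step is to ensure that \emph{every} $\mathbb{Q}$-conjugate of $\beta$ lies outside $F_{\rm alg}$, not merely $\beta$ itself; this is where both hypotheses on $F_{\rm alg}$ (not real closed \emph{and} not algebraically closed) are used. After scaling $\beta$ by a suitable positive integer to obtain an algebraic integer, the resulting $f \in \mathbb{Z}[X]$, together with the PRC version of Proposition~\ref{prop}, produces the required $\exists$-$\emptyset$-definition $\eta_f(K) = \mathcal{O}$.
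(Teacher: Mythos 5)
Your reduction in the non-formally-real case is correct: a PRC field with no orderings is PAC, and then Theorem~\ref{thm:PAC} applies directly. The problem is the formally real case, where the argument you sketch has a genuine gap that I do not think can be filled. Your plan requires a monic irreducible $f\in\mathbb{Q}[X]$ of \emph{odd} degree with no root in $F_{\rm alg}$, and you describe only vaguely a ``Galois-theoretic argument'' for its existence. But such an $f$ need not exist under the stated hypotheses. Take $G$ to be a pro-$2$ Sylow subgroup of $\mathrm{Gal}(\overline{\mathbb{Q}}/\mathbb{Q})$ containing a complex conjugation $c$, and let $E=\overline{\mathbb{Q}}^{G}$. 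Then $E$ is contained in the real closure $\overline{\mathbb{Q}}^{\langle c\rangle}$, so it is formally real; it is neither real closed (as $G\neq\langle c\rangle$) nor algebraically closed (as $G\neq 1$). Yet for any monic irreducible $f\in\mathbb{Q}[X]$ of odd degree with splitting field $L$, Galois group $Q$, and root stabiliser $H$, the index $[Q:H]$ is odd, so $H$ contains a Sylow $2$-subgroup of $Q$; since the image of $G$ in $Q$ is a $2$-group, it lies inside a Sylow $2$-subgroup, hence inside some conjugate of $H$, which means some root of $f$ lies in $E$. So for a PRC field $F$ with $F_{\rm alg}=E$ (and PRC fields realise such algebraic parts) your key existence step fails, and without it the PRC analogue of Proposition~\ref{prop} cannot be invoked.

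The paper avoids this entirely by a different and much shorter route: pass to $K'=K(\sqrt{-1})$, whose residue field $F'=F(\sqrt{-1})$ is PAC by Prestel's theorem on PRC fields, and whose algebraic part $F_{\rm alg}(\sqrt{-1})$ is not algebraically closed by Artin--Schreier precisely because $F_{\rm alg}$ is neither real closed nor algebraically closed. Theorem~\ref{thm:PAC} then gives an $\exists$-$\emptyset$-definition of the valuation ring $\mathcal{O}'$ of $K'$, and interpreting the quadratic extension $K'$ in $K$ pulls this back to an $\exists$-$\emptyset$-definition of $\mathcal{O}=\mathcal{O}'\cap K$. If you want to salvage your approach, you would need to replace the odd-degree requirement with something weaker, or prove that the relevant $f$ exists under an extra hypothesis; as written, the proposal does not establish the corollary.
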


\begin{proof}
Let $K'=K(\sqrt{-1})$. Then the residue field
$F'=F(\sqrt{-1})$ of $K'$ is PAC by \cite{Prestel},
and $F'_{\rm alg}=F_{\rm alg}(\sqrt{-1})$ is not algebraically closed by the Artin-Schreier theorem.
By Theorem \ref{thm:PAC} there exists an $\exists$-$\emptyset$-definition of the unique prolongation $\mathcal{O}'$ of $\mathcal{O}$ in $K'$.
By interpreting $K'$ in $K$ we get an $\exists$-$\emptyset$-definition of $\mathcal{O}=\mathcal{O}'\cap K$ in $K$.
 \end{proof}

\begin{Remark}
Note that as soon as $F$ is infinite we cannot hope to have an $\exists$-$\emptyset$-definition of
a {\em set of representatives} $T\subseteq\mathcal{O}$ of $F$: For example,
if $K=F((t))$, then $F$ is never $\exists$-$\emptyset$-definable in $K$ unless it is finite, cf.~\cite[Corollary 9]{Fehm}.
This explains why we rather define a set $T\subseteq\mathcal{O}$ that {\em meets all residue classes}.
\end{Remark}

\begin{Remark}
We point out that the assumption that $F_{\rm alg}$ {\em is not algebraically closed} in Theorem~\ref{thm:PAC} is indeed necessary.
For example, 
let $K$ be the field of generalized power series $F((\mathbb{Q}))$ over a field $F$.
If $F_{\rm alg}$ is algebraically closed, then so is $K':=F_{\rm alg}((\mathbb{Q}))$, cf.~\cite[18.4.3]{Efrat}.
Therefore, $K'$ is existentially closed in $K$.
So, if $\varphi$ is an $\exists$-$\emptyset$-definition of the valuation ring in $K$, then
$\varphi(K') = \varphi(K)\cap K'$
is a non-trivial valuation ring, contradicting the fact that definable subsets
of an algebraically closed field are finite or cofinite.
\end{Remark}

\section{Uniform definitions}
\label{sec:uniform}

\noindent
We now deal with definitions which are uniform over certain families of finite residue fields.
We start with an example in fixed residue characteristic $p$:

\begin{Theorem}\label{thm:uniformk}
For all prime numbers $p$ and positive integer $m$ there exists an $\exists$-$\emptyset$-formula $\varphi$
such that $\varphi(K)=\mathcal{O}$
for all henselian valued fields $K$ with valuation ring $\mathcal{O}$
and residue field $F=\mathbb{F}_{p^n}$ with $m\notdivides n$.
\end{Theorem}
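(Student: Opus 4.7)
The plan is to combine the formula $\eta_{\tilde f}$ produced by Proposition \ref{prop}, which already works once $|F|>c(m)$, with an auxiliary $\exists$-$\emptyset$-disjunct aimed at the finitely many remaining residue fields $\mathbb{F}_{p^n}$ with $m\nmid n$ and $p^n\le c(m)$. The first step is to apply Lemma \ref{lem:trace} to pick $f\in\mathbb{F}_p[X]$ monic, separable, and irreducible of degree $m$ with $f'(0)\neq 0$, and take a monic lift $\tilde f\in\mathbb{Z}[X]$. Because $m\nmid n$ the roots of $f$ lie in $\mathbb{F}_{p^m}\not\subseteq\mathbb{F}_{p^n}$, so $\bar{\tilde f}$ has no zero in $F$, while $\tilde f'(0)\notin\mathfrak m$. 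Proposition~\ref{prop} then gives $\eta_{\tilde f}(K)\subseteq\mathcal O$ unconditionally and $\eta_{\tilde f}(K)=\mathcal O$ as soon as $p^n>c(m)$.

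To catch the small-field cases I would replace the $\psi_f$-disjunct inside $\eta_f$ by a Frobenius condition whose Hensel lifts hit every residue class simultaneously. Concretely, let $N$ be the least common multiple of the positive integers $n$ with $m\nmid n$ and $p^n\le c(m)$, and put
\[
\eta'(x)\equiv(\exists u,t)(x=u+t\wedge\varphi_{\tilde f}(u)\wedge t^{p^N}-t=0).
\]
The polynomial $X^{p^N}-X$ is monic with reduction of derivative $-1$, so its zeros in $K$ lie in $\mathcal O$ and reduce bijectively, via Hensel's lemma, onto the roots of $X^{p^N}-X$ in $F$, which form $\mathbb{F}_{p^{\gcd(n,N)}}\subseteq F$. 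When $p^n\le c(m)$ and $m\nmid n$ the definition of $N$ forces $n\mid N$, so these reductions exhaust $F$; combined with $\mathfrak m\subseteq\varphi_{\tilde f}(K)$ from Lemma~\ref{lemmag}, Lemma~\ref{idea} yields $\eta'(K)=\mathcal O$. For every admissible $K$ one also has $\eta'(K)\subseteq\mathcal O$, since $\varphi_{\tilde f}(K)\subseteq\mathcal O$ by Lemma~\ref{PZ} and the zeros of $X^{p^N}-X$ are automatically integral.

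Putting $\varphi\equiv\eta_{\tilde f}\vee\eta'$ --- which is equivalent to an $\exists$-$\emptyset$-formula after distributing the existentials --- then gives $\varphi(K)\subseteq\mathcal O$ for every admissible $K$, with $\eta_{\tilde f}$ handling the residue fields of size exceeding $c(m)$ and $\eta'$ handling the rest. The main obstacle to overcome is precisely the $|F|>c(m)$ restriction in Proposition~\ref{prop}: one needs a second set of representatives that is definable \emph{uniformly} for all admissible small $n$, and the point is that $N$ may be chosen as a single integer whose Frobenius polynomial $X^{p^N}-X$ simultaneously covers every allowed small residue field, while the hypothesis $m\nmid n$ ensures $\bar{\tilde f}$ remains zero-free in every admissible $F$ and thereby keeps the auxiliary disjunct inside $\mathcal O$ even when the residue field is in fact large.
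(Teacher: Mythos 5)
Your proof is correct and follows essentially the same strategy as the paper: use $\eta_{\tilde f}$ from Proposition~\ref{prop} for residue fields with $|F|>c(m)$ and handle the finitely many small admissible $n$ by Frobenius-type representatives combined with $\varphi_{\tilde f}$. The only difference is cosmetic: where the paper forms a finite disjunction $\bigvee_{k\in M}\eta_k$ with one Frobenius polynomial $X^{p^k}-X$ per small $k$, you collapse these into a single disjunct $\eta'$ by taking $N$ to be the least common multiple of the small admissible $n$ and using $X^{p^N}-X$, relying on the fact that the zeros of $X^{p^N}-X$ in $\mathbb{F}_{p^n}$ form the subfield $\mathbb{F}_{p^{\gcd(n,N)}}$, which equals $F$ precisely because $n\mid N$; this is a valid and slightly more economical packaging of the same idea.
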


\begin{proof}
Assume that $F=\mathbb{F}_{p^n}$ with $m\notdivides n$.
Choose $f\in\mathbb{F}_p[X]$ irreducible of degree $m$ as in Lemma \ref{lem:trace}.
Then $f$ has no zero in $F$
and
there exists $a\in F$ with $f'(a)\neq0$.
Let $\tilde{f}\in\mathbb{Z}[X]$ be a monic lift of $f$.
By Proposition \ref{prop}, 
$\eta_{\tilde{f}}(K)\subseteq\mathcal{O}$,
and
$\eta_{\tilde{f}}(K)=\mathcal{O}$
for
$p^{n}>c(m)$.
For $k\in\mathbb{N}$ with $m\notdivides k$ let $\psi_k(x)\equiv (x^{p^k}-x=0)$
and let 
$$
 \eta_k(x)\equiv(\exists u,t)(x=u+t\wedge\varphi_{\tilde{f}}(u)\wedge\psi_k(t)).
$$
As in the proof of Theorem \ref{thm:finite} we see that $\eta_k(K)\subseteq\mathcal{O}$, and $\eta_k(K)=\mathcal{O}$ if $n=k$.
Therefore, with $M=\{k\in\mathbb{N} : m\notdivides k\mbox{ and }p^k\leq c(m)\}$,
$$
 \varphi(x) \equiv \eta_{\tilde{f}}(x)\vee\bigvee_{k\in M}\eta_k(x)
$$
satisfies $\varphi(K)=\mathcal{O}$ for all $n$ with $m\notdivides n$.
\end{proof}

\begin{Remark}
The condition $m\notdivides n$ in Theorem~\ref{thm:uniformk} is indeed necessary:
If a $\exists$-$\emptyset$-formula $\varphi$ defines $\mathbb{F}_{p^n}[[t]]$ in $\mathbb{F}_{p^n}((t))$
for all $n$ in a set $M$, then there is some $m\in\mathbb{N}$ such that $m\notdivides n$ for all $n\in M$:
Otherwise, $\bigcup_{n\in M}\mathbb{F}_{p^n}$ would equal the algebraic closure of $\mathbb{F}_p$,
so since every finite extension of $\mathbb{F}_{p^n}((t))$ is isomorphic to $\mathbb{F}_{p^{n'}}((t))$ for some $n|n'$,
we would get a definition of a non-trivial valuation ring in the algebraic closure
of $\mathbb{F}_{p}((t))$, cf.~\cite[Theorem 4]{Cluckersetal}, which is impossible.
\end{Remark}

We now turn to uniformity in $p$.
Let $\mathbb{P}$ denote the set of all odd prime numbers.
For a subset $P\subseteq\mathbb{P}$, we denote by $\delta(P)$ the Dirichlet-density of $P$, if it exists.
For a formula $\varphi$ let
$$
 \mathbb{P}(\varphi) = \{ p\in\mathbb{P} : \varphi(\mathbb{Q}_p)=\mathbb{Z}_p \} 
$$
and let
$\mathbb{P}'(\varphi)$ be the set of $p\in\mathbb{P}$ such that
$\varphi(K)=\mathcal{O}$ for all henselian valued fields $K$ with valuation ring $\mathcal{O}$ and residue field $F=\mathbb{F}_p$. 
We have that $\mathbb{P}'(\varphi)\subseteq\mathbb{P}(\varphi)$,
and it is known that $\mathbb{P}(\varphi)$ has a Dirichlet-density for every formula $\varphi$, 
cf.~\cite[Theorem 16]{Axfinite}, \cite[Theorem 20.9.3]{FJ}.
It is also known that $\mathbb{P}(\varphi)$ differs from 
$\{ p\in\mathbb{P} : \varphi(\mathbb{F}_p((t)))=\mathbb{F}_p[[t]] \}$ only by a finite set, see \cite[p.~606]{AxKochen},
so for all results concerning Dirichlet density we could as well use $\mathbb{F}_p((t))$ instead of $\mathbb{Q}_p$.

\begin{Theorem}\label{thm:uniform}
For every $\epsilon>0$ there exists an $\exists$-$\emptyset$-formula $\varphi$ such that
$\delta(\mathbb{P}'(\varphi))>1-\epsilon$.
\end{Theorem}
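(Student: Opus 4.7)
The plan is to exhibit a single $\exists$-$\emptyset$-formula $\varphi=\eta_f$, with $f$ a well-chosen polynomial in $\mathbb{Z}[X]$, whose reduction $\bar f$ has no zero in $\mathbb{F}_p$ for a set of primes of Dirichlet density greater than $1-\epsilon$. Proposition~\ref{prop} already guarantees $\eta_f(K)=\mathcal{O}$ as soon as $\bar f$ is square-free with no zero in $F$, $\bar f'$ has a non-root in $F$, and $|F|>c(\deg f)$; the last two conditions exclude only finitely many primes $p=|F|$, so the entire problem reduces to ensuring that $\bar f$ has no root in $\mathbb{F}_p$ for most $p$.

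The concrete choice will be a cyclotomic polynomial. Fix a prime $n$ with $n-1>1/\epsilon$ and set $f:=\Phi_n\in\mathbb{Z}[X]$, the $n$-th cyclotomic polynomial of degree $n-1$. It is monic, and since $\Phi_n\mid X^n-1$ and $X^n-1$ is separable modulo every prime $p\neq n$, the reduction $\bar f\in\mathbb{F}_p[X]$ is square-free for all such $p$. The roots of $\Phi_n$ are the primitive $n$-th roots of unity, so $\bar f$ has a zero in $\mathbb{F}_p$ exactly when $\mathbb{F}_p^\times$ contains an element of order $n$, i.e.\ when $p\equiv 1\pmod{n}$. Now let $p\neq n$ be a prime with $p\not\equiv1\pmod{n}$ and $p>c(n-1)$. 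Then $\bar f'$ has at most $n-2<p$ zeros in $\mathbb{F}_p$, so some $\bar a\in\mathbb{F}_p$ satisfies $\bar f'(\bar a)\neq 0$; lifting $\bar a$ to $\mathcal{O}$, Proposition~\ref{prop} yields $\eta_f(K)=\mathcal{O}$ for every henselian valued field $K$ with residue field $\mathbb{F}_p$.

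To finish, I would invoke Dirichlet's theorem on primes in arithmetic progressions: the odd primes $p\not\equiv 1\pmod{n}$ form a set of Dirichlet density $1-1/\phi(n)=1-1/(n-1)$. Discarding the finitely many primes with $p=n$ or $p\leq c(n-1)$ does not affect Dirichlet density, so $\delta(\mathbb{P}'(\varphi))\geq 1-1/(n-1)>1-\epsilon$, as required. The only substantive design choice is picking $f$ so that, in the Galois action on its roots, as few elements as possible have a fixed point; the cyclotomic choice works because $\mathrm{Gal}(\mathbb{Q}(\zeta_n)/\mathbb{Q})$ acts regularly on the primitive $n$-th roots of unity, so only the identity Frobenius (which corresponds exactly to $p\equiv 1\pmod{n}$) has a fixed point. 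Beyond this Galois-theoretic observation I do not anticipate any serious obstacle; the rest is bookkeeping with Proposition~\ref{prop} and Dirichlet's theorem.
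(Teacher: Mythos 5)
Your proof is correct in its essentials but takes a genuinely different route from the paper. The paper does not use cyclotomic polynomials; it takes $f_n=X^2-n$ for $n=2,\dots,N$, uses quadratic reciprocity and Dirichlet to arrange that the set $P=\bigcup_{n=2}^N P_n$ of primes for which \emph{some} $f_n$ stays irreducible mod $p$ has density $>1-\epsilon$, and then patches together the formulas by Boolean combination, $\varphi_n(x)\equiv(\exists y)(y^2=n)\vee\eta_{f_n}(x)$ and $\varphi\equiv\bigwedge_{n=2}^N\varphi_n$. Your single polynomial $\Phi_n$ (with $n$ prime and $n-1>1/\epsilon$) gets the same density $1-1/(n-1)$ in one stroke, precisely because $\mathrm{Gal}(\mathbb{Q}(\zeta_n)/\mathbb{Q})$ acts regularly on the primitive $n$-th roots, and it yields a single formula $\eta_{\Phi_n}$ with no disjunction, which is cleaner. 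Conversely, the paper's Boolean construction buys one small thing for free: each $\varphi_n$ evaluates to all of $K$ when $\bar f_n$ has a root, which makes the equalities $\mathbb{P}'(\varphi)\cap I=\mathbb{P}(\varphi)\cap I=P\cap I$ (for a cofinite set $I$ of primes) immediate, and hence the existence of $\delta(\mathbb{P}'(\varphi))$.

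That last point is the one gap in your write-up: the theorem asserts $\delta(\mathbb{P}'(\varphi))>1-\epsilon$, and $\delta$ is defined only when the Dirichlet density exists, whereas you only show that $\mathbb{P}'(\eta_{\Phi_n})$ \emph{contains} a set $Q$ of density $1-1/(n-1)$. The repair is short and you should include it: for $p\equiv 1\pmod n$ with $p>n$, the reduction $\bar\Phi_n$ has a simple root in $\mathbb{F}_p$, so $\Phi_n$ has a root $c\in\mathbb{Q}_p$ by Hensel; taking $x$ close to $c$ shows $\Phi_n(K)^{-1}$ contains elements of arbitrarily negative valuation, hence so does $U=\Phi_n(K)^{-1}-\Phi_n(K)^{-1}\subseteq\eta_{\Phi_n}(K)$ (since $0\in T$), so $\eta_{\Phi_n}(\mathbb{Q}_p)\ne\mathbb{Z}_p$ and $p\notin\mathbb{P}(\eta_{\Phi_n})$. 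Thus $Q\subseteq\mathbb{P}'(\eta_{\Phi_n})\subseteq\mathbb{P}(\eta_{\Phi_n})\subseteq Q\cup\{\text{finitely many }p\}$; since $\mathbb{P}(\eta_{\Phi_n})$ always has a Dirichlet density, so does $\mathbb{P}'(\eta_{\Phi_n})$, and it equals $1-1/(n-1)>1-\epsilon$.
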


\begin{proof}
For $n\in\mathbb{N}$ let $f_n=X^2-n\in\mathbb{Z}[X]$
and 
$$
 P_n=\left\{p\in\mathbb{P}:\left(\frac{n}{p}\right)=-1\right\}=\left\{p\in\mathbb{P} : \mathbb{F}_p\not\models(\exists y)(y^2=n) \right\}.
$$ 
Note that 
if $K$ is henselian valued with residue field $F=\mathbb{F}_p$, then $p\in P_n$ if and only if $K\not\models(\exists y)(y^2=n)$.
If $p\in P_n$ with $p>c(2)$, then $p\in\mathbb{P}'(\eta_{f_n})$ by Proposition \ref{prop}.
By the quadratic reciprocity law and Dirichlet's theorem,
there exists $N\in\mathbb{N}$ such that for $P=\bigcup_{n=2}^N P_n$ we have $\delta(P)>1-\epsilon$.
Let 
$$
 \varphi_n(x)\equiv (\exists y)(y^2=n)\vee\eta_{f_n}(x)
$$
and $\varphi(x)\equiv \bigwedge_{n=2}^N \varphi_n(x)$.
Let $p\in \mathbb{P}$ which lies in the open interval $I:=(c(2),\infty)$.
If $p\in P_n$, then $\varphi_n(K)=\mathcal{O}$, otherwise $\varphi_n(K)=K$.
Thus, $\varphi(K)=\bigcap_{n=2}^N \varphi_n(K)=\mathcal{O}$ if $p\in P$, and $\varphi(K)=K$ otherwise.
So, if $p\in P$, then
$p\in\mathbb{P}'(\varphi)\subseteq\mathbb{P}(\varphi)$,
and if $p\notin P$, then $p\notin\mathbb{P}(\varphi)$.
Thus, $\mathbb{P}'(\varphi)\cap I=\mathbb{P}(\varphi)\cap I=P\cap I$,
and therefore
$\delta(\mathbb{P}'(\varphi))=\delta(\mathbb{P}(\varphi))=\delta(P)>1-\epsilon$.
\end{proof}

On the other hand, 
the proof of \cite[Theorem 5]{Cluckersetal} shows the following:

\begin{Proposition}\label{thm:Cluckers}
Let $P$ be a set of prime numbers with $\delta(P)=1$.
Then there exists no $\exists$-$\emptyset$-formula $\varphi$ such that
$P\subseteq\mathbb{P}(\varphi)$.
\end{Proposition}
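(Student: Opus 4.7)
The plan is to adapt the ultraproduct argument from the proof of \cite[Theorem~5]{Cluckersetal} to our density-$1$ setting. Suppose for contradiction that $\varphi$ is an $\exists$-$\emptyset$-formula with $P\subseteq\mathbb{P}(\varphi)$ and $\delta(P)=1$; I aim to produce from this a henselian valued field in which $\varphi$ defines the valuation ring but which also contains an algebraically closed subfield with non-trivial induced valuation, thus contradicting the fact that every definable subset of an algebraically closed field is finite or cofinite.

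First I would choose the ultrafilter carefully. For each monic irreducible $f\in\mathbb{Z}[X]$, the set $S_f=\{p\in\mathbb{P}:f\text{ has a root in }\mathbb{F}_p\}$ has positive Dirichlet density by Chebotarev, so combined with $\delta(P)=1$ every finite intersection $P\cap S_{f_1}\cap\cdots\cap S_{f_k}$ is infinite. By Zorn's lemma there is a non-principal ultrafilter $\mathcal{U}$ on $\mathbb{P}$ containing $P$ and every $S_f$. I then form the usual ultraproducts $K^\ast=\prod\mathbb{Q}_p/\mathcal{U}$, $\mathcal{O}^\ast=\prod\mathbb{Z}_p/\mathcal{U}$, $F^\ast=\prod\mathbb{F}_p/\mathcal{U}$. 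By \L o\'s's theorem, $(K^\ast,\mathcal{O}^\ast)$ is a henselian valued field of characteristic zero, $F^\ast$ is pseudo-finite and contains $\overline{\mathbb{Q}}$ (by the choice of $\mathcal{U}$), and $\varphi(K^\ast)=\mathcal{O}^\ast$. Hensel's lemma lifts $\overline{\mathbb{Q}}\subseteq F^\ast$ to a subfield $L_0\cong\overline{\mathbb{Q}}$ of $\mathcal{O}^\ast$ on which the induced valuation is trivial.

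Next I would pass to a sufficiently saturated elementary extension $K^{\ast\ast}$ of $K^\ast$ (so that $\varphi(K^{\ast\ast})=\mathcal{O}^{\ast\ast}$ still holds) and construct inside it a countable algebraically closed subfield $L\supseteq L_0$ containing an element $\pi$ with $v(\pi)>0$. Once $L$ is in hand, the model-completeness of ACF, applied via the algebraic closure of $K^{\ast\ast}$, makes $L$ existentially closed in $K^{\ast\ast}$, so $\varphi(L)=\varphi(K^{\ast\ast})\cap L=\mathcal{O}^{\ast\ast}\cap L$ is a non-trivial valuation ring of $L$, giving the required contradiction.

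The main obstacle is the construction of $L$. A naive choice such as $\pi=(p)_p/\mathcal{U}$ followed by taking the relative algebraic closure of $L_0(\pi)$ inside $K^\ast$ fails, because for instance $\sqrt{\pi}\notin K^\ast$. Following \cite{Cluckersetal} I would first replace $\pi$ by a sequence, e.g.\ $\pi_p=p^{p!}$, whose $p$-adic valuations become divisible by every positive integer for cofinitely many $p$, so that all roots $\pi^{1/n}$ exist in $K^\ast$. Then I would use saturation of $K^{\ast\ast}$ together with Hensel's lemma in the PAC residue field $F^\ast\supseteq\overline{\mathbb{Q}}$ to iteratively adjoin roots of all further polynomials whose splitting is consistent with $\mathrm{Th}(K^{\ast\ast})$; verifying that this iterative process actually closes off to a genuinely algebraically closed $L$ is the technical core of the argument.
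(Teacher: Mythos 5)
The paper does not actually give a proof of this Proposition; it simply records that the argument of Cluckers--Derakhshan--Leenknegt--Macintyre for their Theorem~5 goes through under the weaker hypothesis $\delta(P)=1$. Your reconstruction of that argument is structurally correct, and the one place where the density-$1$ hypothesis is genuinely used --- observing via Chebotarev that each $S_f$ has positive Dirichlet density, so $P\cap S_{f_1}\cap\cdots\cap S_{f_k}$ is always infinite and an ultrafilter $\mathcal{U}\ni P$ containing all the $S_f$ can be extracted --- you handle exactly right. The final step (an algebraically closed subfield is existentially closed, so an $\exists$-$\emptyset$-definable set would be a nontrivial valuation ring in an algebraically closed field) is the same trick the paper itself uses in the Remark following Theorem~\ref{thm:PAC}.

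The genuine gap is the construction of $L$, which you explicitly leave open (``verifying that this iterative process actually closes off \dots is the technical core''). You also over-engineer it: saturation and an iterative root-adjunction are not needed, and it is not clear your sketch would terminate in an algebraically closed field. Here is the short route. Take $\pi=(p^{p!})_p/\mathcal{U}\in K^\ast$ as you suggest, lift $\overline{\mathbb{Q}}\subseteq F^\ast$ into $\mathcal{O}^\ast$ by Hensel's lemma, and let $L$ be the relative algebraic closure of $\overline{\mathbb{Q}}(\pi)$ inside $K^\ast$. A relatively algebraically closed subfield of a henselian valued field is itself henselian. Its residue field is an algebraic extension of the residue field $\overline{\mathbb{Q}}$ of $\overline{\mathbb{Q}}(\pi)$, hence equals $\overline{\mathbb{Q}}$; its value group lies in the divisible hull of $\mathbb{Z}v(\pi)$ and contains each $\tfrac{1}{n}v(\pi)$ (because $\pi^{1/n}=(p^{p!/n})_p/\mathcal{U}\in K^\ast$ is algebraic over $\overline{\mathbb{Q}}(\pi)$), so it equals $\mathbb{Q}v(\pi)$. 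A henselian valued field of equal characteristic zero with algebraically closed residue field and divisible value group is algebraically closed, so $L$ is algebraically closed, while $\mathcal{O}^\ast\cap L$ is a nontrivial valuation ring of $L$ since $\pi\in L\setminus L^\times_{\mathcal{O}}$. This closes the gap without any appeal to saturation.
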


This also explains that Theorem \ref{thm:uniform} cannot be strengthened to give a uniform $\exists$-$\emptyset$-definition
for {\em every} set $P$ with $\delta(P)<1$:

\begin{Proposition}
There exists a set $P$ of prime numbers with $\delta(P)=0$ for which there exists no $\exists$-$\emptyset$-formula $\varphi$
such that $P\subseteq\mathbb{P}(\varphi)$.
\end{Proposition}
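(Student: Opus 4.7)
The plan is to diagonalize over the countably many $\exists$-$\emptyset$-formulas in the ring language $\mathcal{L}$. Enumerate them as $\varphi_1,\varphi_2,\ldots$, and construct $P=\{p_1,p_2,\ldots\}$ recursively by selecting at step $i$ a prime $p_i\in\mathbb{P}\setminus\mathbb{P}(\varphi_i)$ so much larger than $p_{i-1}$ that the resulting set has Dirichlet density zero. Then by construction $p_i\in P\setminus\mathbb{P}(\varphi_i)$, so $P\not\subseteq\mathbb{P}(\varphi_i)$ for every $i$, which is exactly what the statement asks for.

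The essential input for this to succeed is that $\mathbb{P}\setminus\mathbb{P}(\varphi)$ is infinite for every $\exists$-$\emptyset$-formula $\varphi$. This I would extract from Proposition~\ref{thm:Cluckers} together with Ax's theorem already cited above: the latter tells us that $\mathbb{P}(\varphi)$ has a well-defined Dirichlet density, and if that density were equal to $1$ then $\mathbb{P}(\varphi)$ itself would be a density-$1$ set of primes contained in $\mathbb{P}(\varphi)$, contradicting Proposition~\ref{thm:Cluckers}. Hence $\delta(\mathbb{P}(\varphi))<1$, so the complement $\mathbb{P}\setminus\mathbb{P}(\varphi)$ has positive upper density and is in particular infinite, leaving room for the inductive choice.

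With this freedom, I would fix the recursion by requiring, say, $p_i>2^{p_{i-1}}$ for $i\geq 2$; such a $p_i$ exists since $\mathbb{P}\setminus\mathbb{P}(\varphi_i)$ is infinite. The set $P$ is then iterated-exponentially sparse, so $\sum_{p\in P}p^{-s}$ stays bounded as $s\to 1^+$ while $\sum_{p\in\mathbb{P}}p^{-s}\to\infty$, giving $\delta(P)=0$. The only non-formal step is the observation $\delta(\mathbb{P}(\varphi))<1$; the remainder is a textbook diagonalization together with a routine Dirichlet-series estimate, so the main obstacle is simply isolating the right corollary of Proposition~\ref{thm:Cluckers}, rather than anything deeper.
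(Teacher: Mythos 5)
Your proof is correct and follows essentially the same diagonalization as the paper: enumerate the $\exists$-$\emptyset$-formulas, pick at stage $i$ a prime $p_i\notin\mathbb{P}(\varphi_i)$, and space the $p_i$ out to force $\delta(P)=0$. The only cosmetic differences are that the paper concludes $\mathbb{P}\smallsetminus\mathbb{P}(\varphi_i)$ is infinite directly from Proposition~\ref{thm:Cluckers} (non-cofiniteness) rather than via Ax's density theorem, and it enforces $\delta(P)=0$ by dominating $P$ termwise by a fixed density-zero set $N$ instead of by an explicit iterated-exponential growth condition.
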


\begin{proof}
List all  $\exists$-$\emptyset$-formulas as $\varphi_1,\varphi_2,\dots$ and let $N=\{\ell_1,\ell_2,\dots\}\subseteq\mathbb{P}$
be any infinite set with $\delta(N)=0$.
Proposition \ref{thm:Cluckers} (or rather \cite[Theorem 5]{Cluckersetal} directly) implies that
for each $i$, $\mathbb{P}(\varphi_i)$ is not cofinite in $\mathbb{P}$.
Therefore, we can choose some $p_i\in\mathbb{P}$ with $p_i>\ell_i$ and $p_i\notin\mathbb{P}(\varphi_i)$. 
Then $P=\{p_1,p_2,\dots\}$ has $\delta(P)\leq\delta(N)=0$, but $P\not\subseteq\mathbb{P}(\varphi_i)$ for each $i$.
\end{proof}

\section*{Acknowledgements}

\noindent
The author would like to thank Will Anscombe, Patrick Helbig, Franziska Jahnke and Jochen Koenigsmann
for interesting and productive discussions on this subject,
and Moshe Jarden for helpful comments on a previous version of the manuscript.

\end{document}